\newtheorem{theorem}{Theorem}
\newtheorem{lemma}{Lemma}
\newtheorem{corollary}{Corollary}
\newcommand{\pcal}{\mathcal{P}}
\newcommand{\rmD}{\mathrm{D}}
\newcommand{\gradapprox}{{\mathrm{D}_{\mu}}}
\title{
Differentiability and Control of a Model for Granular Material Accumulation
}
\author{Rafael Arndt and Carlos N. Rautenberg}
\thanks{C. N. R. was supported by NSF grant DMS-2012391.}
\thanks{R. Arndt ({\tt\small tarndt@gmu.edu}) and C. N. Rautenberg ({\tt\small crautenb@gmu.edu}) are members of the Department of Mathematical Sciences at George Mason University, and of the Center for Mathematics and Artificial Intelligence (CMAI), Fairfax VA, USA.  }%
\begin{document}


\begin{abstract}
We consider differentiability issues associated to the problem of minimizing the accumulation of a granular cohesionless material on a certain surface. The design variable or control is determined by source locations and intensity thereof. The control problem is described by an optimization problem in function space and constrained by a  variational inequality or a non-smooth equation.  We address a regularization approach via a family of nonlinear partial differential equations, and provide a novel result of Newton differentiability of the control-to-state map. Further, we discuss solution algorithms for the state equation as well as for the optimization problem.
\end{abstract}

\maketitle

\section{Introduction}
The growth description of piles made of granular cohensionless and homogeneous materials is a complex, nonconvex and nonsmooth process. In this problem, we assume that material is deposited by a known source on a supporting structure $u_0$ that may not be flat. The material is characterized by its angle of repose $\alpha>0$: The steepest angle at which a sloping surface formed from a point source of material is stable. The source (assumed constant in time) is given by $g:\Omega \to \mathbb{R}$, where $\Omega\subset \mathbb{R}^\mathrm{d}$ with $\mathrm{d}=1,2$, and represents the {(density)}  rate of a granular material being deposited on the smooth supporting structure $u_0:\Omega\to \mathbb{R}$  with $u_0|_{\partial \Omega}=0$. The latter boundary condition translates into the ability of material to escape the surface freely if it reaches the boundary $\partial \Omega$. In  the limit $\mathrm{time}\to\infty$, the function $u:\Omega\to\mathbb{R}$ describing the surface of the outmost layer of material is approximated by the solution to the stationary variational inequality: Find  $u\in K$ such that 
\begin{align}\label{eq:original}
	\langle -\epsilon\Delta u - f, v - u\rangle_{H^{-1},H_0^1} \geq 0,
\end{align}
for all $v\in K$ with 
\begin{equation*}
	 K:=\{v\in H_0^1(\Omega) : |\mathrm{D} v| \leq \varphi \: \text{ a.e.} \}
\end{equation*}
and where $f=g+\epsilon \Delta u_0$, and $ H_0^1(\Omega)$ is the space of $L^2(\Omega)$ functions such that their weak gradients belong to $L^2(\Omega)^{\mathrm{d}}$, together with function values vanishing at the boundary $\partial\Omega$ in the sense of the trace; see \cite{adams_fournier_2003}.

We assume that $\mathrm{D}$ is either the weak gradient or approximation thereof, and $0<\epsilon\ll 1$. The function $\varphi:\Omega \to \mathbb{R}$ is strictly and uniformly above zero, i.e., 
\begin{equation}\label{eq:nu}
	\varphi(x)\geq \nu>0,
\end{equation}
for almost all $x\in \Omega$. If the pile is homogeneous and $|\nabla u_0|\leq \tan(\alpha)$, then  $\varphi\equiv \tan(\alpha)$. In the case of an inhomogeneous pile (more than one material present),  $\alpha$ is not longer a constant and neither is $\varphi$. There is a further more complex case (not treated within this paper) when  $|\nabla u_0|> \tan(\alpha)$ on a positive measure set within $\Omega$; in this case $\varphi$ is actually dependent on $u$ and the problem is a \emph{quasi-variational inequality}. This approach was pioneered by Prigozhin \cite{Prigozhin1994,Prigozhin1996,Prigozhin1996a}.

A control problem of interest associated to problem \eqref{eq:original} corresponds to the selection of a source of material $f$ so that the accumulation of material $u=u(f)$ is close to some desired structure $u^d$ while $f$ also is minimized in some sense. This leads to the minimization of the functional
\begin{equation}\label{eq:J}
		J(u(f), f) := \frac12\int_\Omega|u(f)-u^d|^2\,\textup{d}x + \lambda \|f\|^2_{Y'},
\end{equation} 
for some space $Y'\subset H^{-1}(\Omega)$ where $H^{-1}(\Omega)$ is the topological dual to $H_0^{1}(\Omega)$. In the same vein, the design of algorithms for the minimization of \eqref{eq:J} and the study of optimality conditions require information on  the differentiability of the map $f\mapsto u(f)$. This is an extremely complex task due to the constraint $K$. In light of this, problem \eqref{eq:original} is replaced by the regularized version 
\begin{align}\label{eq:reg}
	 -\epsilon\Delta u +\gamma\mathcal{P}(u) = f \quad \text{in } H^{-1}(\Omega),
\end{align}  
for some $\mathcal{P}:H_0^1(\Omega)\to H^{-1}(\Omega)$ monotone, nonsmooth, and vanishing at $K$, whose specific form is given later in the paper. Further, $\gamma>0$ and we recover (in some sense explained later) the original problem when $\gamma\to \infty$.

The abstract version of the problem above can be formulated as the following minimization problem with a non-smooth equation as constraint:
\begin{equation}\label{eq:control_problem}\tag{P}
\begin{aligned}
&\mathrm{Minimize }\:\:\:  J(u, f)\\
&\text{subject to } A(u) + F(u) = f
\end{aligned}
\end{equation}
for some objective function \(J:X\times Y'\to\mathbb{R}\), with $Y'\subset X'$, and
where \(A:X\to X'\) is a strongly monotone, and continuously Fr\'{e}chet differentiable operator. Additionally, \(F:X\to X'\) is monotone, and Newton differentiable (see the next section for a definition). Existence of solutions to \eqref{eq:control_problem} is available under mild conditions.

In this paper we study the Newton differentiability properties of the solution map $f\mapsto u(f)$ associated to the constraint in \eqref{eq:control_problem} and how this permeates to $f\mapsto J(u(f), f)$. In particular, this implies that \eqref{eq:control_problem} is suitable to be tackled by  semismooth Newton approaches.

The rest of the paper is organized as follows. In section \ref{sec:abstract_results} we study results involving differentiability and monotonicity, and further establish an abstract result, Theorem \ref{thm:thm1}, for the characterization of the sensitivity of the control-to-state map.  Subsequently, in section \ref{sec:appl}, we apply the abstract results to our specific application. There we improve known results for the Newton differentiability of convex regularization of gradient type constraints, and also consider analogous results for operators approximating the gradient. Several lemmata are proven that culminate in Theorem \ref{thm:ftou} which establishes the control-to-state differentiability result for the application example. The paper ends with a discussion on solution algorithms and future research directions.

\section{Preliminaries and theoretical results}\label{sec:abstract_results}

We assume throughout this section that $X$ is a reflexive and real Banach space, and additionally assume that the control space $Y'$ is such that $Y'\subset X'$ with $Y$ a reflexive and real Banach space. The typical example of application here is $X=H_0^1(\Omega)$ and $Y'\simeq Y = L^2(\Omega)$.

We start with monotonicity definitions used throughout the paper. We say that $A:X\to X'$ is \emph{strongly monotone} if there exist $c>0$ and $q>1$ such that
\begin{equation}\label{eq:monot}
\left\langle A (u+h) - A(u), h\right\rangle \geq c \|h\|^q,
\end{equation}
for all $u,h\in X$. Further, we say that it is \emph{monotone} if \eqref{eq:monot} holds for $c=0$; see for example \cite{Showalter}.

Two differentiability concepts are used in this work: Fr\'{e}chet,  and Newton (or slant) differentiability. For the definition of the Fr\'{e}chet one we refer the reader to \cite{curtain1977functional} or a nonlinear functional analysis book. We introduce now the Newton differentiability concept; see \cite{Ito2008} for a solid introduction on the subject. Let $X, Y$ be real Banach spaces and $D\subset X$ be an open set.
Then $F: D\subset X\to Y$ is called \emph{Newton differentiable} at $u$ if there exists an open neighborhood $\mathcal{N}(u)\subset D$ and mappings 
\begin{equation*}
	G_F:\mathcal{N}(u)\to \mathcal{L}(X, Y)
\end{equation*} 
such that
\begin{equation*}
	\lim_{\|h\|_X\to 0} \frac{\|F(u+h) - F(u) - G_F(u+h)h\|_Y}{\|h\|_X} = 0.
\end{equation*}
Note that Newton derivatives are in general not unique, and it is direct to prove that if  
$F: D\subset X\to Y$ is continuously Fr\'{e}chet differentiable, then it is Newton differentiable. 

In order to simplify notation, we use the Landau $o$ notation: We denote $\|r(h)\|_Y=o(\|h\|_X)$ for a map $r:X\to Y$ if the following holds true
\begin{equation*}
	\lim_{\|h\|_X\to 0} \frac{\|r(h)\|_Y}{\|h\|_X}=0,
\end{equation*}
i.e., $\|r(h)\|_Y=o(\|h\|_X)$ implies that $r$ vanishes  faster than $h$ as $h\to 0$.

We start now with a result that establishes that for a strongly monotone differentiable map, its derivative is also strongly monotone under relatively mild conditions.

\begin{lemma}
\label{thm:inheritedCoercivity}
Let \(A:D\subset X\to X'\) with $D$ open satisfy for some fixed $c>0$ and $2\leq q< 3$
\begin{equation}\label{eq:first}
\left\langle A (u+h) - A(u), h\right\rangle \geq c \|h\|^q,
\end{equation}
for all $u,u+h\in D$.

In addition, suppose that 
\begin{equation}\label{eq:second}
	\langle A (u+h) - A(u), h \rangle = \langle A'(u)h, h\rangle + \langle w(u,h), h \rangle
\end{equation}
with \(\|w(u,h)\|_{X^*}\leq M \|h\|_X^{2}\), for all $u\in D$, and all $h\in B_r(0,X)$ for some sufficiently small $r=r(u)>0$.

Then for each $u\in D$, \(A'(u)\) is strongly monotone with $q=2$, i.e.,
\begin{equation*}
	\langle A'(u)h, h \rangle \geq \tilde c \|h\|_X^2,
\end{equation*}
for some $\tilde c>0$ and all $h\in X$.
\end{lemma}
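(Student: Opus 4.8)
The plan is to fix $u \in D$, establish the desired coercivity of $A'(u)$ on a small ball around the origin, and then bootstrap to all of $X$ using that $A'(u)$ is a bounded linear operator (so $h \mapsto \langle A'(u)h,h\rangle$ is homogeneous of degree two). Since $D$ is open, I would first choose a radius $\rho = \rho(u) > 0$ so small that $B_\rho(u) \subseteq D$, that $\rho \le r(u)$ (so that \eqref{eq:second} is available for the increments under consideration), and, most importantly, that $M\rho^{3-q} \le c/2$. This last choice is possible precisely because $q < 3$: the exponent $3-q$ is strictly positive, hence $M\rho^{3-q} \to 0$ as $\rho \to 0^+$.

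For any $h \in X$ with $\|h\|_X \le \rho$ I would then solve \eqref{eq:second} for the quadratic form, bound the remainder by the duality pairing inequality $\langle w(u,h),h\rangle \le \|w(u,h)\|_{X'}\|h\|_X \le M\|h\|_X^3$, and invoke \eqref{eq:first}:
\begin{equation*}
\langle A'(u)h,h\rangle = \langle A(u+h)-A(u),h\rangle - \langle w(u,h),h\rangle \ge c\|h\|_X^q - M\|h\|_X^3 = \|h\|_X^q\bigl(c - M\|h\|_X^{3-q}\bigr) \ge \tfrac{c}{2}\|h\|_X^q .
\end{equation*}
Thus $\langle A'(u)h,h\rangle \ge (c/2)\|h\|_X^q$ holds for every $h$ in the closed ball of radius $\rho$.

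To remove the size restriction, take an arbitrary $h \ne 0$ and write $h = t\hat h$ with $\hat h := \rho\, h/\|h\|_X$, which has norm exactly $\rho$, and $t := \|h\|_X/\rho > 0$. By linearity of $A'(u)$,
\begin{equation*}
\langle A'(u)h,h\rangle = t^2\langle A'(u)\hat h,\hat h\rangle \ge t^2\,\tfrac{c}{2}\rho^q = \tfrac{c}{2}\,\rho^{q-2}\,\|h\|_X^2 ,
\end{equation*}
so the assertion follows with $\tilde c := (c/2)\,\rho(u)^{q-2} > 0$ (the case $h = 0$ being trivial).

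I do not anticipate a genuine obstacle; the argument is short once the bookkeeping is done. The only points needing care are making the dependence $\rho = \rho(u)$ explicit — it is controlled both by $r(u)$ and by the distance from $u$ to $\partial D$ — and applying the remainder estimate of \eqref{eq:second} only within the ball in which it is assumed valid. Conceptually, the hypothesis $q < 3$ is exactly what renders the cubic error term $M\|h\|^3$ subordinate to the degree-$q$ monotonicity term as $h \to 0$, while $q \ge 2$ keeps $\rho^{q-2}$ a bona fide positive constant.
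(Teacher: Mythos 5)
Your argument is correct, and its skeleton matches the paper's: combine \eqref{eq:first} and \eqref{eq:second} on a small ball around the origin, absorb the cubic remainder $M\|h\|_X^3$, and then use the degree-two homogeneity of $h\mapsto\langle A'(u)h,h\rangle$ to pass from the sphere of radius $\rho(u)$ to all of $X$, picking up the factor $\rho^{q-2}$. Where you diverge is in how the remainder is absorbed when $2<q<3$: you simply shrink the radius until $M\rho^{3-q}\le c/2$, so that $c\|h\|_X^q-M\|h\|_X^3\ge\tfrac{c}{2}\|h\|_X^q$ holds uniformly on the ball, and you need no case distinction between $q=2$ and $q\in(2,3)$. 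The paper instead writes $h=t\tilde h$ with $\|\tilde h\|_X=s$ and maximizes the auxiliary function $g(t)=t^{q-2}cs^q-Mts^3$ over $t$, claiming its positive maximum is attained in $(0,1)$ for $s$ small; a short computation shows the unconstrained maximizer is $t^*=\bigl(\tfrac{(q-2)c}{M}\bigr)^{1/(3-q)}s^{-1}$, which leaves $(0,1)$ precisely when $s$ is small, so the paper's optimization step as stated is delicate — when $t^*>1$ the concave function $g$ is increasing on $[0,1]$ and the relevant bound is $g(1)=s^q(c-Ms^{3-q})$, which is exactly your estimate. Your version is therefore not just an equivalent rearrangement but a cleaner and more robust execution of the same idea. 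The only bookkeeping you should keep visible, and you do, is that $\rho(u)$ must respect both $r(u)$ from \eqref{eq:second} and the distance from $u$ to $\partial D$ so that \eqref{eq:first} applies to the increment $h$.
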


\begin{proof}
Let $u\in D$ be fixed. From \eqref{eq:first} and \eqref{eq:second} we obtain, for all $h\in B_s(0,X)$ with $s>0$ sufficiently small, that
\[
c \|h\|_X^q
\leq   \left\langle A'(u)h, h\right\rangle +M \|h\|_X^{3}.
\]
Let $h=t\tilde{h}$ for some $\tilde{h}\in B_s(0,X)$ with $\|\tilde{h}\|_X=s$, and $t\in [0,1]$, then
\begin{equation*}
    t^{q-2}c s^q
    \leq  \langle A'(u)\tilde h, \tilde h \rangle
    + Mts^{3}.
\end{equation*}
If $q=2$, simply take $t=0$ and the result follows. If $q\in (2,3)$, then define $g(t)=t^{q-2}c s^q-Mts^{3}$, and we choose $s$ sufficiently small so that the positive maximum of $g$ is achieved in $(0,1)$; this can be done since $q<3$. In fact, since $q\in (2,3)$, the maximum value is given by $g(t^*)=\tilde{c}s^2$ for some $\tilde{c}>0$ independent of $s$. Thus, $\tilde{c}s^2
    \leq  \langle A'(u)\tilde h, \tilde h \rangle$ and hence
\begin{equation*}
  \tilde{c}\|\tilde{h}\|^2_X  \leq  \langle A'(u)\tilde h, \tilde h \rangle.
\end{equation*}
Scaling by $\gamma>0$ and using that $A'(u)$ is linear, completes the proof.
\end{proof}

A few words are in order on the assumption in \eqref{eq:second}. This condition is satisfied if $A$ is twice Fr\'{e}chet differentiable on the open set $D$ and its second order derivative \(A''\) is uniformly bounded in \(D\); the result follows from the Taylor remainder theorem, see \cite{zeidler2013nonlinear}. 

We show next that in some cases the Newton derivative also inherits monotonicity properties of the original map. In particular, this requires some continuity assumption on the Newton derivative with respect to the base point. 

\begin{lemma}
\label{thm:Fmono}
Let  \(F:D\subset X\to X'\) be Newton differentiable with Newton derivative $G_F$ with $D$ open and suppose that $F$ is monotone, i.e.,
\begin{equation}\label{eq:Fmono}
\left\langle F (u+h) - F(u), h\right\rangle \geq  0,
\end{equation}
for all $u,u+h\in D$. 
If $w\mapsto \langle G_F(w)h,h\rangle$ is continuous at $w=u\in D$ for each $h$, then \(G_F(u)\) is monotone, i.e.,
\begin{equation*}
	\langle G_F(u)h, h \rangle \geq  0
\end{equation*}
for  all $h\in X$.
\end{lemma}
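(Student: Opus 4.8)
The plan is to test the monotonicity of $G_F(u)$ against a fixed but arbitrary direction $h\in X$, extracting $\langle G_F(u)h,h\rangle$ as a limit of difference quotients of $F$ taken along the ray $t\mapsto u+th$ as $t\to 0^+$. Since $D$ is open and $u\in D$, and since $F$ is Newton differentiable at $u$ with some neighborhood $\mathcal N(u)$, for all sufficiently small $t>0$ the point $u+th$ lies in $\mathcal N(u)\subset D$, so every object below is well defined.

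First I would specialize the Newton differentiability estimate to increments of the form $th$. Setting $r(th):=F(u+th)-F(u)-G_F(u+th)(th)$, the definition gives $\|r(th)\|_{X'}=o(\|th\|_X)=o(t)$ as $t\to 0^+$. Pairing with $th$ and using $|\langle r(th),th\rangle|\leq \|r(th)\|_{X'}\,\|th\|_X$, I obtain $\langle r(th),th\rangle=o(t^2)$, whence
\[
\langle F(u+th)-F(u),\,th\rangle = t^2\,\langle G_F(u+th)h,h\rangle + o(t^2).
\]
Next I would bring in monotonicity \eqref{eq:Fmono}, which forces the left-hand side to be nonnegative, so $t^2\,\langle G_F(u+th)h,h\rangle + o(t^2)\geq 0$; dividing by $t^2>0$ yields $\langle G_F(u+th)h,h\rangle\geq -o(1)$.

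Finally I would let $t\to 0^+$. By the hypothesis that $w\mapsto\langle G_F(w)h,h\rangle$ is continuous at $w=u$, the left-hand side converges to $\langle G_F(u)h,h\rangle$ while the right-hand side tends to $0$, giving $\langle G_F(u)h,h\rangle\geq 0$. As $h\in X$ was arbitrary, $G_F(u)$ is monotone.

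The argument is essentially bookkeeping with the Landau symbol, and the only genuine subtlety is that the Newton-differentiability limit is a priori taken over all $h\to 0$, not merely along a fixed ray; restricting the incremental directions to $\{th:t>0\}$ for fixed $h$ is legitimate, and it is precisely this restriction that makes the powers of $t$ cancel so that $\langle G_F(u+th)h,h\rangle$ survives in the limit. The continuity assumption on $w\mapsto\langle G_F(w)h,h\rangle$ is exactly what converts the inequality obtained at the shifted base points $u+th$ into one at $u$, and it cannot be omitted, since Newton derivatives are in general not even unique.
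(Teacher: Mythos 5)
Your argument is correct and is essentially the same as the paper's proof: both restrict the Newton-differentiability estimate to increments $th$ along a fixed ray, invoke monotonicity to get $\langle G_F(u+th)h,h\rangle \geq o(1)$ after dividing by $t^2$, and pass to the limit $t\to 0$ using the continuity hypothesis. No gaps.
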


\begin{proof}
	By \eqref{eq:Fmono} and the definition of Newton derivative, we observe that
	\begin{equation*}
		\left\langle G_F(u+h)h, h\right\rangle \geq  \left\langle r(h), h\right\rangle,
	\end{equation*}
	for some $r(h)=o(\|h\|_X)$. Let $h=t\tilde{h}$ with $\tilde{h}\in X$ fixed, then 
		\begin{equation*}
		 \langle G_F(u+t\tilde{h})\tilde{h}, \tilde{h} \rangle \geq   \frac{\langle r(t\tilde{h}), t\tilde{h} \rangle}{\|t\tilde{h}\|^2_X}\|\tilde{h}\|^2_X ,
	\end{equation*}
	and the result follows by taking the limit of $t\to 0$.
\end{proof}

We now establish the main theorem of the section and the tool which later allows us to determine differentiability properties of the control-to-state map in the introduction.

\begin{theorem}
\label{thm:thm1}

Let \(A: X\to X'\)  satisfy the assumptions of Lemma \ref{thm:inheritedCoercivity} for $q=2$ and $D=X$, and \(F:X\to X'\) be monotone, continuous and Newton differentiable, and suppose that either $F$ satisfies the assumptions of Lemma \ref{thm:Fmono} or that its Newton derivative $G_F(u):X\to X'$ is monotone. 

Then, for \(f\in Y'\), $u(f)$ is well-defined as the unique solution  $u\in X$ to the equation 
\begin{equation}\label{eq:u_f_definition}
A(u) + F(u) = f \quad \text{ in }\: X'.
\end{equation}
In addition,
\begin{equation*}
	Y'\ni f\mapsto u(f)\in X 
\end{equation*}
is Newton differentiable  with Newton derivative $Y'\ni f\mapsto G_u(f)\in \mathcal{L}(Y',X)$ defined as: For $h\in Y'$, $w(f)=G_u(f)h$ is the unique solution $w\in X$ to
\begin{equation}\label{eq:w_candidate}
  A'(u(f)) w + G_F(u(f)) w = h \quad \text{ in } X'
\end{equation}
where \(A'\) is the Fréchet derivative of \(A\).
\end{theorem}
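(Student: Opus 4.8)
The plan is to split the argument into three parts: well-posedness of \eqref{eq:u_f_definition}, well-posedness of the linearized equation \eqref{eq:w_candidate} (so that $G_u$ is well-defined as a bounded linear operator), and finally the Newton-differentiability estimate itself. For well-posedness of \eqref{eq:u_f_definition}, I would observe that $A+F$ is strongly monotone (since $A$ is strongly monotone with $q=2$ by hypothesis and $F$ is monotone), continuous (both $A$ and $F$ are), hence by the Browder--Minty theorem it is a bijection from $X$ onto $X'$ with a Lipschitz-continuous inverse; in particular $u(f)$ is well-defined for every $f\in X'\supset Y'$. For \eqref{eq:w_candidate}, note that $A'(u(f))$ is linear, bounded (as the Fréchet derivative of a continuously differentiable map), and strongly monotone with constant $\tilde c$ by Lemma~\ref{thm:inheritedCoercivity}; adding the monotone linear operator $G_F(u(f))$ (monotone either directly by hypothesis or via Lemma~\ref{thm:Fmono}) keeps it strongly monotone and bounded, so Lax--Milgram (in its nonsymmetric Banach-space form, i.e.\ again Browder--Minty for linear operators) gives a unique $w\in X$ solving \eqref{eq:w_candidate}, with $\|w\|_X\le \tilde c^{-1}\|h\|_{X'}\le C\|h\|_{Y'}$. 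This bound is exactly what shows $G_u(f)\in\mathcal L(Y',X)$.

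For the main estimate, fix $f\in Y'$ and $h\in Y'$, and write $u=u(f)$, $u_h=u(f+h)$, $\delta=u_h-u$, and $w=G_u(f+h)h$, the solution of $A'(u_h)w+G_F(u_h)w=h$. I want to show $\|\delta-w\|_X=o(\|h\|_{Y'})$. Subtracting the state equations gives $A(u_h)-A(u)+F(u_h)-F(u)=h$, and subtracting the defining equation for $w$ yields
\begin{equation*}
 A'(u_h)(\delta-w)+G_F(u_h)(\delta-w) = \bigl(A'(u_h)\delta - (A(u_h)-A(u))\bigr) + \bigl(G_F(u_h)\delta - (F(u_h)-F(u))\bigr).
\end{equation*}
Testing with $\delta-w$ and using strong monotonicity of $A'(u_h)$ (constant $\tilde c$, uniform since $A''$ is uniformly bounded) together with monotonicity of $G_F(u_h)$, I get
\begin{equation*}
 \tilde c\,\|\delta-w\|_X^2 \le \bigl(\|A'(u_h)\delta - (A(u_h)-A(u))\|_{X'} + \|G_F(u_h)\delta - (F(u_h)-F(u))\|_{X'}\bigr)\|\delta-w\|_X,
\end{equation*}
hence $\|\delta-w\|_X \le \tilde c^{-1}\bigl(\|A'(u_h)\delta - (A(u_h)-A(u))\|_{X'} + \|G_F(u_h)\delta - (F(u_h)-F(u))\|_{X'}\bigr)$. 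The second term is $o(\|\delta\|_X)$ by Newton differentiability of $F$ (here I use that $u_h = u + \delta$, so $A'(u_h)\delta$ and $G_F(u_h)\delta$ are evaluated at the perturbed base point $u_h$, which is precisely the form $G_F(u+\delta)\delta$ appearing in the definition). The first term: since $A$ is continuously Fréchet differentiable with uniformly bounded $A''$, a Taylor expansion gives $A(u_h)-A(u) = A'(u)\delta + O(\|\delta\|_X^2)$; I still need $A'(u_h)\delta - A'(u)\delta = o(\|\delta\|_X)$, which follows from continuity of $u'\mapsto A'(u')$ (again a consequence of bounded $A''$, giving in fact a Lipschitz/$O(\|\delta\|_X^2)$ bound). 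So the first term is $O(\|\delta\|_X^2)=o(\|\delta\|_X)$.

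The final step is to convert $o(\|\delta\|_X)$ into $o(\|h\|_{Y'})$: from the Lipschitz inverse bound on $A+F$ we have $\|\delta\|_X=\|u(f+h)-u(f)\|_X\le C\|h\|_{X'}\le C\|h\|_{Y'}$, so $\|h\|_{Y'}\to0$ forces $\|\delta\|_X\to0$ and $o(\|\delta\|_X)/\|h\|_{Y'}\le C\,o(\|\delta\|_X)/\|\delta\|_X\to0$ (the degenerate case $\delta=0$ being trivial). Therefore $\|\delta-w\|_X=o(\|h\|_{Y'})$, which is exactly Newton differentiability of $f\mapsto u(f)$ with Newton derivative $G_u$. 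The main obstacle I anticipate is keeping the estimates at the \emph{perturbed} base point $u_h$ rather than $u$ — Newton differentiability of $F$ is stated with $G_F$ evaluated at $u+h$, and one must be careful that the linearization error for $A$ is also controlled at $u_h$; this is where the uniform bound on $A''$ (equivalently, the hypothesis \eqref{eq:second} with a uniform constant $M$ on all of $D=X$) is essential, and it is also what guarantees the coercivity constant $\tilde c$ of $A'(u_h)$ does not degenerate as $h$ varies.
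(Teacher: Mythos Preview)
Your proof is correct and follows essentially the same route as the paper: establish well-posedness of the state and linearized equations via monotonicity, subtract the two to obtain an equation for $R=\delta-w$ governed by $A'(u_h)+G_F(u_h)$, test with $R$, and use uniform strong monotonicity to absorb the linearization error, which is $o(\|\delta\|_X)=o(\|h\|_{Y'})$ by Lipschitz continuity of $f\mapsto u(f)$. The only cosmetic difference is that the paper bundles $E:=A+F$ into a single Newton-differentiable map (using that $A$ continuously Fr\'echet differentiable $\Rightarrow$ Newton differentiable with derivative $A'$), while you split the $A$- and $F$-contributions and control the $A$-term via a Taylor bound with $A''$; the resulting identity $G_E(u_h)R=r(h)$ and the coercivity estimate are identical.
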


\begin{proof}
Note first that $u(f)$ is well-defined for any $f\in Y'$ given that  there exists a unique solution to \eqref{eq:u_f_definition}; the result follows by standard methods in monotone operator theory, see \cite[Theorem 2.1]{Showalter}. Further, by Lemma \ref{thm:inheritedCoercivity}, we have that for any $u$, $A'(u)$ is strongly monotone, and $G_F(u)$ is monotone by initial assumption or Lemma \ref{thm:Fmono}, so that $w$, the unique solution to \eqref{eq:w_candidate}, is also well-defined; again by \cite[Theorem 2.1]{Showalter}. 

Since \(A\) is continuously Fréchet differentiable, it also is Newton differentiable. Then \(E:=A+F\) is Newton differentiable with derivative $G_E:=A'+G_F$ satisfying 
\begin{equation*}
	\langle G_E(u)v,v\rangle \geq \langle A'(u)v,v\rangle\geq \tilde{c} \|v\|_X^2,
\end{equation*} 
for all $u,v\in X$.

For any \(f,h\in Y'\), define \(d( h) = u(f+h) - u(f)\). Considering \eqref{eq:u_f_definition} with \(f\) and \(f+h\) and subtracting the results, we obtain  
\[
E(u(f) + d(h)) - E(u(f)) = h.
\]
Since $A$ satisfies \eqref{eq:first} with $q=2$, and $F$ is monotone, it follows that \(Y'\ni f\mapsto u(f)\in X\) is Lipschitz continuous. Then, for a map $r:Y'\to X'$ satisfying \(\|r(h)\|_{X'}=o(\|d( h)\|_{X})\), we have \(\|r(h)\|_{X'}=o(\|h\|_{Y'})\). Subsequently, from the definition of Newton derivative,
\begin{equation}\label{eq:Ederivative}
G_E(u(f)+d(h))d(h) = h + r(h),
\end{equation}
where 
\begin{equation*}
	\|r(h)\|_{X'}=o(\|h\|_{Y'}).
\end{equation*}

In contrast, from \eqref{eq:w_candidate}, for some $w=w(f+h)$ we have that
\[
G_E(u(f+h))w(f+h) = h,
\]
and substracting this from \cref{eq:Ederivative}, we obtain
\begin{equation}
	\label{eq:testR}
G_E(u(f+h))R(h) = r(h),
\end{equation}
where $$R(h) := u(f+h) - u(f) - w(f+h).$$ By testing in \eqref{eq:testR} with $R(h)$,  we observe due to the strong monotonicity of $G_E(u(f+h))$ that
\[
\tilde{c}\|R(h)\|_X^2\leq \langle G(u(f+h))R(h), R(h)\rangle = \langle r(h), R(h) \rangle,
\]
and since 
\begin{equation*}
	\langle r(h), R(h) \rangle\leq \|r(h)\|_{X'}\|R(h)\|_X ,
\end{equation*}
we observe
\begin{equation*}
	\|R(h)\|_X=o(\|h\|_{Y'}),
\end{equation*}
and the proof is finished. 
\end{proof}
 
We now aim at applying the results in this section to our sandpile control problem.

\section{Application to the sandpile control problem}\label{sec:appl}
In the framework of \cref{eq:control_problem}, we consider the problem in the introduction associated to the control of the stationary accumulation of granular material.
In this section, we fix $X=H_0^1(\Omega)$, and  $X\subset Y\subset L^2(\Omega)$ with $Y$ a real Banach space, e.g., $Y=L^2(\Omega)$.  Note that this implies that $L^2(\Omega)\subset Y'\subset X'$ since we identify $L^2(\Omega)$ with its topological dual.

In \eqref{eq:reg}, we define the constraint regularization operator $\pcal : X \to X'$ as

\begin{equation}\label{def:calP}
		\langle\pcal(u), w\rangle_{X',X}
		= \int_{\Omega^+(u)} P(\mathrm{D} u)\cdot \mathrm{D} w\,\textup{d}x
		= \int_{\Omega^+(u)}(|\mathrm{D} u| - \varphi)^\pm \frac{(\mathrm{D} u \cdot \mathrm{D} w)}{|\mathrm{D} u|}\,\textup{d}x,
\end{equation}
where 
\begin{equation*}
	\Omega^+(u) := \{x\in\Omega : |\rmD u(x)|>0 \:\:\text{ a.e.}\}.
\end{equation*}
Additionally, $P(u) := q(u)b(u)$ for
\begin{equation*}
	q(u) := \frac{u}{|u|}, \quad\text{ and }\quad  b(u) := (|u| - \varphi)^\pm,
\end{equation*}
and $(\cdot)^\pm := \min(1, \max(0, \cdot))$ in the pointwise sense: for $g:\Omega \to \mathbb{R}$, then 
\begin{equation*}
	(g(x))^\pm=\begin{cases}
		0,& \qquad \text{ if } g(x)\leq 0,\\
		g(x),& \qquad \text{ if } 0\leq g(x)\leq 1,\\
		1,& \qquad \text{ if } 1\leq g(x).
	\end{cases}
\end{equation*}
 
Two possible choices for $\mathrm{D} $ are considered: $\mathrm{D} = \nabla$, the weak gradient,
and $\mathrm{D} = \gradapprox$,
where $\gradapprox : L^p(\Omega)\to L^p(\Omega)^\mathrm{d}$ is a bounded linear operator for all $1\leq p\leq +\infty$, approximating the gradient (e.g., by means of incremental quotients). The parameter $\mu>0$ can be considered to obtain $\gradapprox\to \nabla$ in some sense, as $\mu\downarrow 0$. In addition, note that (formally) we can write $\mathcal{P}(u)=\mathrm{D}'P(\mathrm{D}u)$.
 
 For both cases, we have that $\mathcal{P}$ corresponds to the derivative of the convex functional
\begin{equation*}
	J_{\mathcal{P}}(u)= \int_\Omega   K(|\mathrm{D} u(x)| - \varphi(x)) \textup{d}x,
\end{equation*}
where 
\begin{equation*}
	K(t)=\begin{cases}
		\int_0^t(y)^\pm\mathrm{d}y,& \quad \text{ if } t\geq 0;\\
		0,& \quad \text{ if } t< 0.
	\end{cases}
\end{equation*}
It follows that $\mathcal{P}$ is monotone given that $J_{\mathcal{P}}$ is convex, and further from its definition we observe that  $\mathcal{P}$ is continuous. Thus,  in both the cases $\mathrm{D} \in\{ \nabla, \gradapprox\}$, we obtain that for every $f\in Y'$, since $-\Delta$ is strongly monotone, equation \eqref{eq:reg} has a unique solution by the same argument as in the beginning of the proof of Theorem \ref{thm:thm1}. Hence, for each $\gamma>0$, there exists $u_\gamma\in X$ solution to \eqref{eq:reg}, and standard arguments exploiting the monotonicity of $J_{\mathcal{P}}$ determine that $u_\gamma\to u^*$ in $X$ as $\gamma\to \infty$, where $u^*$ is the solution to \eqref{eq:original}; see for example \cite{hr12}.

Next we show that $P(u)=q(u)b(u)$  is Newton differentiable between appropriate spaces, and later we use the result to obtain a Newton differentiability result for $\mathcal{P}(u)=\mathrm{D}'P(\mathrm{D}u)$.

\begin{lemma}\label{lm:Gp_Newton_derivative}
	The operator $P: L^p(\Omega)^\mathrm{d}\to L^q(\Omega)^{\mathrm{d}}$ with $2\leq 2q \leq p $
	is Newton differentiable. A Newton derivative $G_P$, can be defined as
	\begin{equation}\label{eq:G_P}
		G_P(u) = q(u)G_b(u) + b(u)Q(u),
	\end{equation}
	where
	\[
		Q(u)(x) := \frac{1}{|u(x)|}\left(\mathrm{id} - \frac{u(x)u^T(x)}{|u(x)|^2}\right),
	\]
	and
	\[
		G_b(u)(x) := G_{\max}^{\min}(|u(x)|-\varphi(x))\frac{u^T(x)}{|u(x)|}
	\]
	is a Newton derivative of $b: L^p(\Omega)^{\mathrm{d}}\to L^q(\Omega)$, and
	\begin{equation*}
		G^{\min}_{\max}(w)(x) := \chi_{(0,1)}(w(x))
	\end{equation*}
  is a Newton derivative of $(\cdot)^\pm: L^p(\Omega)\to L^{q}(\Omega)$.
\end{lemma}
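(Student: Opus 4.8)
The plan is to build the Newton derivative from the bottom up along the factorization $P=q\,b$ with $b=(|\cdot|-\varphi)^{\pm}$. At the outset I would fix the convention $q(0):=0$, $Q(0):=0$ (harmless, since each is multiplied by a factor that vanishes at the origin), and note that $\Omega$ has finite measure, so $L^{p}(\Omega)\hookrightarrow L^{r}(\Omega)$ for $r\le p$; in particular $2q\le p$ yields $\||h|^{2}\|_{L^{q}}=\|h\|_{L^{2q}}^{2}\le C\|h\|_{L^{p}}^{2}$, while $q\ge 1$, $p\ge 2q$ give the norm gap $q<p$. It would then also be worth observing that, with these conventions, $G_{P}(u)$ is a bounded matrix field (using $\varphi\ge\nu>0$ on the set where $b(u)\ne0$), so $G_{P}(u)\in\mathcal{L}(L^{p}(\Omega)^{\mathrm{d}},L^{q}(\Omega))$ and $w(f)$ is well defined.

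First I would record the scalar fact that $(\cdot)^{\pm}\colon L^{p}(\Omega)\to L^{q}(\Omega)$ is Newton differentiable with Newton derivative given by multiplication by $\chi_{(0,1)}$. Since $t\mapsto(t)^{\pm}$ is piecewise affine with kinks only at $0$ and $1$, the pointwise remainder $(w+s)^{\pm}-(w)^{\pm}-\chi_{(0,1)}(w+s)s$ vanishes unless $w(x)$ and $w(x)+s(x)$ lie on different affine pieces, in which case it is bounded by $|s(x)|$ and either $0<|w(x)|<|s(x)|$ or $0<|w(x)-1|<|s(x)|$ (with $w(x)\in\{0,1\}$ contributing only on $\{|s|\ge1\}$). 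The set where the remainder is nonzero thus has measure tending to $0$ as $\|s\|_{L^{p}}\to 0$ (bound it by $\{|s|\ge\delta\}\cup\{0<|w|<\delta\}\cup\{0<|w-1|<\delta\}\cup\{|s|\ge1\}$ and let $\delta\downarrow0$, using continuity of measure and Chebyshev), and H\"older's inequality together with $q<p$ gives the $o(\|s\|_{L^{p}})$ bound. By the same pattern $b\colon L^{p}(\Omega)^{\mathrm{d}}\to L^{q}(\Omega)$ is Newton differentiable with the stated $G_{b}$. I would \emph{not} route this through a Newton derivative of $u\mapsto|u|$: the map $v\mapsto|v|$, having a nondegenerate Hessian away from the origin, fails to be Newton differentiable $L^{p}(\Omega)^{\mathrm{d}}\to L^{q}(\Omega)$ for $\mathrm{d}\ge2$. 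Instead I would estimate $r_{b}:=b(u+h)-b(u)-G_{b}(u+h)h$ directly, splitting $\Omega$ pointwise according to which of the pieces $\{|u+h|\le\varphi\}$, $\{\varphi<|u+h|<\varphi+1\}$, $\{|u+h|\ge\varphi+1\}$ contains the base point and how $|u|$ compares to $\varphi$ and $\varphi+1$. Where $u$ and $u+h$ lie in the same piece, $r_{b}$ is either $0$ or, on the middle piece, the second-order remainder of $v\mapsto|v|$ at $u+h$; the exact identity
\[
|u+h|-|u|-\frac{(u+h)^{T}h}{|u+h|}=\frac{|h|^{2}-(|u|-|u+h|)^{2}}{2|u+h|}
\]
together with $|u+h|>\varphi\ge\nu$ on that piece gives $|r_{b}(x)|\le|h(x)|^{2}/(2\nu)$, and summing in $L^{q}$ consumes the norm gap $2q\le p$. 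Where $u$ and $u+h$ lie in different pieces, $r_{b}$ is $O(|h(x)|)$, but these transition sets force $|u(x)|$ to lie within $|h(x)|$ of a kink value $\varphi(x)$ or $\varphi(x)+1$, hence have measure tending to $0$, so H\"older with $q<p$ again suffices --- except on the exact kink sets $\{|u|=\varphi\}$ and $\{|u|=\varphi+1\}$, which may carry positive measure but on which the offending first-order term vanishes, so the displayed identity again produces a purely second-order bound $|r_{b}|\le|h|^{2}/(2\nu)$.

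For the product $P=q\,b$ I would telescope
\[
P(u+h)-P(u)-G_{P}(u+h)h=q(u+h)\,r_{b}+b(u)\,[q(u+h)-q(u)]-b(u+h)\,Q(u+h)h,
\]
where the first term is $o(\|h\|_{L^{p}})$ by the previous step and $|q|\le1$. For the remaining two I would split on $\{|h|<\nu/4\}$ versus $\{|h|\ge\nu/4\}$. On the first set, rewriting the two terms as $b(u)[q(u+h)-q(u)-Q(u+h)h]+[b(u)-b(u+h)]Q(u+h)h$: where $|u|\le\nu/2$ both $b(u)$ and $b(u+h)$ vanish (since $|u|,|u+h|<\nu\le\varphi$) and the expression is $0$; where $|u|>\nu/2$ one has $|u+h|>\nu/4$, so a second-order Taylor estimate for $q$ on the segment $[u,u+h]$ bounds $q(u+h)-q(u)-Q(u+h)h$ by $C|h|^{2}/\nu^{2}$ and $|b(u)-b(u+h)|\,|Q(u+h)|\,|h|$ by $C|h|^{2}/\nu$, both summable via $2q\le p$. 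On $\{|h|\ge\nu/4\}$, whose measure is $O(\|h\|_{L^{p}}^{p})$ by Chebyshev, the crude bounds $|b(u)[q(u+h)-q(u)]|\le2$ and, since $b(u+h)\ne0$ forces $|u+h|>\nu$, $|b(u+h)Q(u+h)h|\le C|h|/\nu$ give a contribution dominated in $L^{q}$ by $2\,|\{|h|\ge\nu/4\}|^{1/q}+C\nu^{-1}\|h\chi_{\{|h|\ge\nu/4\}}\|_{L^{q}}$, which is $o(\|h\|_{L^{p}})$ because $p/q\ge2>1$ and $q<p$. Assembling the three steps gives the claim.

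I expect the estimate of $r_{b}$ in the middle step to be the main obstacle: it is the only place where the genuine second-order curvature of $v\mapsto|v|$ must be controlled, and the whole argument hinges on the uniform lower bound $\varphi\ge\nu>0$, which simultaneously keeps the active region $\{|u|>\varphi\}$ away from the origin (so the curvature is bounded) and makes $b$ and $G_{b}$ vanish near the origin (so the singularities of $q$ and $Q$ there never enter the product estimates). A secondary care point is the bookkeeping of the kink sets $\{|u|=\varphi\}$ and $\{|u|=\varphi+1\}$, which need not be null but on which the relevant first-order contributions vanish identically.
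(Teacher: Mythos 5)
Your proof is correct and follows the same basic strategy as the paper: factor $P=qb$, telescope the remainder into a product-rule decomposition, absorb the quadratic pointwise remainders into $\|h\|_{L^p}^2$ via H\"older using $2q\le p$, and use $\varphi\ge\nu>0$ so that the weight $b$ vanishes wherever $q$ and $Q$ could be singular. The differences are organizational. The paper groups the remainder as $b(u+h)\bigl[q(u+h)-q(u)-Q(u+h)h\bigr]+\bigl[q(u)-q(u+h)\bigr]\bigl[b(u+h)-b(u)\bigr]+q(u+h)\,r_b$, i.e.\ it attaches the $Q(u+h)h$ term to the weight $b(u+h)$ rather than $b(u)$ as you do; the two groupings are mirror images and both work for the same reason. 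For the Newton differentiability of $b$ the paper simply invokes the known result for $(\cdot)^\pm$ and a composition argument with references, whereas you give a self-contained case analysis with the explicit second-order identity for $|\cdot|$ and the measure-of-transition-sets argument; your version is more detailed and handles the possibly non-null kink sets $\{|u|=\varphi\}$, $\{|u|=\varphi+1\}$ explicitly, which is a genuine gain in rigor. Your extra split on $\{|h|\ge\nu/4\}$ is not needed in the paper's arrangement (there the bounds are activated only where $b(u+h)\neq 0$, forcing $|u+h|\ge\nu$ pointwise), but it is a valid alternative.

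Two small corrections. First, your parenthetical claim that $v\mapsto|v|$ \emph{fails} to be Newton differentiable from $L^p(\Omega)^{\mathrm{d}}$ to $L^q(\Omega)$ for $\mathrm{d}\ge2$ is wrong: the same splitting you use elsewhere (the bad set $\{0<|u|<\delta\}\cup\{|h|\ge\delta/2\}$ has small measure, the remainder vanishes on $\{u=0\}$, and is $O(|h|^2/\delta)$ where $|u|\ge\delta$ and $|h|<\delta/2$) shows it \emph{is} Newton differentiable whenever $2q\le p$; indeed the paper's own estimate of its term $I$ uses a Newton derivative of $|\cdot|$. Your argument does not rely on this claim, so nothing breaks. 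Second, the displayed identity should carry a minus sign,
\begin{equation*}
|u+h|-|u|-\frac{(u+h)^{T}h}{|u+h|}=-\,\frac{|h|^{2}-\bigl(|u|-|u+h|\bigr)^{2}}{2|u+h|},
\end{equation*}
which is immaterial since only the magnitude is used.
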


\begin{proof}
The fact that $G_b(u)$ is a Newton derivative of $b: L^p(\Omega)^{\mathrm{d}}\to L^q(\Omega)$ follows from
  $G^{\min}_{\max}(u)(x) = \chi_{(0,1)}(u(x))$  being  a Newton derivative of $(\cdot)^\pm: L^p(\Omega)\to L^{q}(\Omega)$, and this is
similarly obtained as the Newton derivative of the $\max$ function alone, see \cite{Ito2008} and compare to \cite{hr12}.

Initially, we observe that
\begin{align*}
	P(u+h) - P(u) - G_P(u+h)h
=\, &b(u+h)\left(q(u+h) - q(u) - Q(u+h)h\right)\\
&+\left(q(u)-q(u+h)\right)\left(b(u+h)-b(u)\right)\\
&+q(u+h)\left(b(u+h)-b(u)-G_b(u+h)h\right)\\
=\, &I + II + III,
\end{align*}
and in what follows we show that 
\begin{equation*}
	I+II+III = o(\|h\|_p).
\end{equation*}

{\it Consider initially $I$}.
Note that we have
\begin{equation*}
	I = b(u+h) \left( - \frac{u}{|u+h||u|}\left(|u+h| - |u| - \frac{(u+h)^Th}{|u+h|}\right)
												+ \frac{(u+h)^Th}{|u+h|^2}\left(\frac{u+h}{|u+h|} - \frac{u}{|u|}\right)\right).
\end{equation*}
Since $$\left|b(u+h) \frac{u}{|u+h||u|}\right|\leq \frac{1}{\nu},$$ due to the fact that $\varphi\geq \nu$ a.e. in $\Omega$,
and because 
\begin{equation*}
	G_{|\cdot|}(u+h)h=\frac{(u+h)^Th}{|u+h|},
\end{equation*}
when $u+h\geq \nu$, where $G_{|\cdot|}$ is a Newton derivative of $|\cdot|: L^p(\Omega)\to L^q(\Omega)$, we observe that the first part of the sum in $I$ is $o(\|h\|_p)$. 

Regarding the second term, if $u+h\leq \nu$ then $b(u+h)=0$, and if $u+h\geq \nu$, then
\begin{equation*}
	\left|\frac{(u+h)^Th}{|u+h|^2}\right|\leq \frac{|h|}{\nu} \: \text{ and } \:\left(\frac{u+h}{|u+h|} - \frac{u}{|u|}\right) \leq 2\frac{|h|}{\nu},
\end{equation*}
and since $b(u+h)\leq 1$, we get a bound of $2\frac{|h|^2}{\nu^2}$. An application of Hölder's inequality implies that the second term of $I$ is also $ o(\|h\|_p)$ as we see next. Suppose that $z:\Omega\to\mathbb{R}$ satisfies $|z|\leq |h|^2.$
	Thus by applying Hölder's inqualitiy to $|h|^{2q}$ and $1$ for the exponents $\frac{p}{2q}\geq 1$ and $\left(\frac{p}{2q}\right)'$ we get
	\[
	\int_\Omega |z|^q \mathrm{d}x\leq \int_\Omega |h|^{2q}\mathrm{d}x
	\leq C_1(\Omega) \left(\int_\Omega|h|^p\mathrm{d}x\right)^{2q/p}
	\]
	Therefore (by taking the $q$-th root), we get
	\[
		\|z\|_{L^q(\Omega)} \leq C_2(\Omega)  \left(\int_\Omega|h|^p\mathrm{d}x\right)^{2/p} = C_2(\Omega)  \|h\|^2_{L^p(\Omega)},
	\]
which proves the statement.

{\it We turn our attention now to $II$}.
For a.e. \(x\in\Omega\) it holds that
\begin{equation}
	\left| b(u(x)+h(x)) - b(u(x))\right|\leq |h(x)|  \chi_{\Omega_\nu}(x)
\end{equation}
where
\[
	\Omega_\nu := \{x\in\Omega : |u(x)|<\nu \land |u(x)+h(x)|<\nu \:\:\: \text{a.e.}\}.
\]
Thus for \(x\in\Omega\setminus\Omega_\nu\) we observe that
\[
  \left| q(u) - q(v) \right| \leq 2\min\left(\frac{|h|}{|u|}, \frac{|h|}{|u+h|}\right) \leq 2\frac{|h|}{\nu},
  \]
	thus $II$ is bounded by $ |h|^2/\nu$.
	As above, this implies $II = o(\|h\|_p)$.

{\it Finally, we consider $III$}.
Since $G_b$ is the Newton derivative of $b : L^p(\Omega)^\mathrm{d}\to L^q(\Omega)$, and $|q|$ is bounded by $1$, we have that $III = o(\|h\|_p)$.

\end{proof}

Let $\pcal^{\nabla}:X\to X'$ and $\pcal^{\gradapprox}:X\to X'$ be defined as $\mathcal{P}$ in \eqref{def:calP} for $\mathrm{D}=\nabla$ and $\mathrm{D}=\gradapprox$, respectively. Although these operators are well-defined as maps from $X$ to $X'$, in order to obtain a differentiability properties in the case of $\pcal^{\nabla}$, the operator needs to be defined in slightly different spaces as we see next.

\begin{lemma}\label{cor:pcal_newtondifferentiable}
The maps $\pcal^{\nabla} $ and $\pcal^{\gradapprox} $ are Newton differentiable when defined as $\pcal^{\nabla}: X \to (W_0^{1,\infty}(\Omega))'$
	and $\pcal^{\gradapprox}: X\to X'$. The general expression of a Newton derivative  $G_\pcal$ in these cases 
	 is given by
  \begin{equation}
  	\label{eq:G_Pcal}
  	\langle G_\pcal(u)v, w \rangle = \int_{\Omega^+(u)}(G_P(\mathrm{D} u)\mathrm{D} v)\cdot \mathrm{D} w\,\textup{d}x,
  \end{equation}
	for
	\begin{itemize}
		\item[(i)] all $u, v \in X$, $w\in W_0^{1,\infty}(\Omega)$, and  the duality pairing considered between $(W_0^{1,\infty}(\Omega))'$ and $W_0^{1,\infty}(\Omega)$ in the case $\mathrm{D}=\nabla $ and $\pcal=\pcal^\nabla$.
				\item[(ii)] all $u, v,w \in X$, and  the duality pairing considered between $X'$ and $X$ in the case of $\mathrm{D}=\gradapprox$ and $\pcal=\pcal^\gradapprox$.
	\end{itemize}
\end{lemma}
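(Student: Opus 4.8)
The plan is to reduce the Newton differentiability of $\pcal^{\nabla}$ and $\pcal^{\gradapprox}$ to that of $P$ established in Lemma~\ref{lm:Gp_Newton_derivative}, using the chain structure $\pcal(u)=\mathrm{D}'P(\mathrm{D}u)$. First I would fix a candidate derivative, namely $G_\pcal(u)$ defined through \eqref{eq:G_Pcal}, and check it is a bounded linear operator into the stated dual space; this is where the two cases diverge. For $\mathrm{D}=\gradapprox$, the operator $\gradapprox:L^2(\Omega)\to L^2(\Omega)^{\mathrm{d}}$ is bounded and linear, and in fact bounded from $L^p$ to $L^p$ for every $p$, so given $u,v\in X=H_0^1(\Omega)$ we have $\gradapprox u,\gradapprox v\in L^2(\Omega)^{\mathrm{d}}$, and more importantly the full strength of Lemma~\ref{lm:Gp_Newton_derivative} can be invoked because $\gradapprox$ maps $L^p$ into $L^p$; so $G_P(\gradapprox u)\gradapprox v\in L^q(\Omega)^{\mathrm{d}}$ with $L^q\hookrightarrow L^2$ for $q\le 2$, and pairing against $\mathrm{D}w=\gradapprox w$ with $w\in X$ makes sense. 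For $\mathrm{D}=\nabla$, the obstruction is that $\nabla:H_0^1(\Omega)\to L^2(\Omega)^{\mathrm{d}}$ only gives $L^2$ regularity of $\nabla u$, which is not enough to feed Lemma~\ref{lm:Gp_Newton_derivative} (that lemma needs $L^p$ with $p\ge 2q\ge 2$, i.e. an integrability gap); the fix is to take the test function $w$ in $W_0^{1,\infty}(\Omega)$ so that $\nabla w\in L^\infty$ absorbs the loss, i.e. $G_P(\nabla u)\nabla v\in L^1(\Omega)^{\mathrm{d}}$ pairs against $\nabla w\in L^\infty(\Omega)^{\mathrm{d}}$, and correspondingly the target space must be $(W_0^{1,\infty}(\Omega))'$ rather than $X'$.

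The core estimate to verify is the Newton-difference quotient: for $u,v\in X$ and the appropriate test space,
\begin{equation*}
	\bigl|\langle \pcal(u+v)-\pcal(u)-G_\pcal(u+v)v,\,w\rangle\bigr|
	= \left|\int_{\Omega^+(u)}\bigl(P(\mathrm{D}u+\mathrm{D}v)-P(\mathrm{D}u)-G_P(\mathrm{D}u+\mathrm{D}v)\mathrm{D}v\bigr)\cdot\mathrm{D}w\,\textup{d}x\right| .
\end{equation*}
Here I would argue that the domain discrepancy between $\Omega^+(u)$ and $\Omega^+(u+v)$ is harmless: on the set where $\mathrm{D}u=0$ the integrand of $\pcal(u)$ and of $G_\pcal(u+v)$ vanishes by construction of $P$ and $G_P$ (the factor $u/|u|$ being replaced by $0$ there as in \eqref{def:calP}), and on the set where $\mathrm{D}(u+v)=0$ but $\mathrm{D}u\ne 0$ one has $|\mathrm{D}u|=|\mathrm{D}v|$, so that region contributes $O(|\mathrm{D}v|)$ and is controlled exactly as term $I$ was in Lemma~\ref{lm:Gp_Newton_derivative}. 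Modulo this bookkeeping, Cauchy–Schwarz (resp. Hölder with the $L^\infty$ factor) gives the bound by $\|P(\mathrm{D}u+\mathrm{D}v)-P(\mathrm{D}u)-G_P(\mathrm{D}u+\mathrm{D}v)\mathrm{D}v\|_{L^q}\,\|\mathrm{D}w\|_{L^{q'}}$ (with $q'$ the conjugate exponent, so $q=1,q'=\infty$ in the $\nabla$ case after choosing $p=2$), and Lemma~\ref{lm:Gp_Newton_derivative} makes this $o(\|\mathrm{D}v\|_{L^p})=o(\|v\|_X)$ since $\|\mathrm{D}v\|_{L^p}\le C\|v\|_X$ (trivially for $\mathrm{D}=\nabla$ with $p=2$; by the $L^p$-boundedness of $\gradapprox$ together with $X\hookrightarrow L^p$ for the admissible range of $p$ when $\mathrm{D}=\gradapprox$).

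The main obstacle I expect is precisely this exponent accounting, namely identifying the correct $p,q$ for which Lemma~\ref{lm:Gp_Newton_derivative} applies while keeping $\mathrm{D}w$ in a dual space that renders \eqref{eq:G_Pcal} a well-defined element of the claimed codomain, and simultaneously checking that $G_\pcal(u)\in\mathcal{L}(X,(W_0^{1,\infty})')$ (resp. $\mathcal{L}(X,X')$) — linearity is immediate from linearity of $\mathrm{D}$ and of $v\mapsto G_P(\mathrm{D}u)\mathrm{D}v$, and boundedness follows from $\|G_P(\cdot)\|$ being pointwise bounded by a constant depending only on $\nu$ (all three terms of \eqref{eq:G_P} are $O(1/\nu)$ on $\Omega^+$, as already noted in the proof of Lemma~\ref{lm:Gp_Newton_derivative}). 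For $\mathrm{D}=\gradapprox$ there is no genuine obstruction because one can choose $p$ as large as needed (indeed $p=\infty$) thanks to the $L^p$-boundedness of $\gradapprox$ for all $p$; the $\nabla$ case is the genuinely constrained one and is the reason the statement is split into (i) and (ii).
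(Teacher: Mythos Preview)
Your proposal is correct and follows essentially the same route as the paper: both arguments exploit the chain structure $\pcal(u)=\mathrm{D}'P(\mathrm{D}u)$, invoke Lemma~\ref{lm:Gp_Newton_derivative} with $(p,q)=(2,1)$ for $\mathrm{D}=\nabla$ (forcing the target space $(W_0^{1,\infty}(\Omega))'$) and with a large finite $p$ via the Sobolev embedding $H_0^1(\Omega)\hookrightarrow L^p(\Omega)$ in $\mathrm{d}=1,2$ for $\mathrm{D}=\gradapprox$, and close with H\"older. Two small remarks: the $\Omega^+$ bookkeeping is simpler than you make it---since $P(0)=0$ and $G_P$ extends by zero where its argument vanishes (the factors $b$ and $G_b$ kill everything for $|\cdot|<\nu\le\varphi$), all three integrals extend to $\Omega$ and no separate region analysis is needed; and the parenthetical ``indeed $p=\infty$'' fails in $\mathrm{d}=2$, though any finite $p$ suffices.
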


\begin{proof}
Consider (i) first. The map $\nabla:X\to L^2(\Omega)^\mathrm{d}$ is Fr\'{e}chet differentiable with derivative $\nabla$, and by Lemma \ref{lm:Gp_Newton_derivative} the map $P:L^2(\Omega)^\mathrm{d}\to L^1(\Omega)^\mathrm{d}$ is Newton differentiable. Then  $u\mapsto P(\nabla u)$ is Newton differentiable (since it is the composition of a Newton and a Fr\'{e}chet differentiable mapping  \cite{Ito2008}) as map from $X\to L^1(\Omega)^\mathrm{d}$ with Newton derivative $u\mapsto G_P(\nabla u)\nabla$. From here, and application of H\"{o}lder's inequality can be used to show that $\mathcal{P}^\nabla:X \to (W_0^{1,\infty}(\Omega))'$ is Newton differentiable (analogously as in \cite[Corollary A.3]{hr12}), with Newton derivative given by \eqref{eq:G_Pcal}.

Next consider (ii). In the case of $\gradapprox$, we use that for $\mathrm{d}=1,2$, by the Sobolev Embedding Theorem (e.g. see \cite{adams_fournier_2003}), $X$ is embedded in $L^p(\Omega)$ for any $2\leq p<\infty$, and by the same result we have that $L^q(\Omega)$ is continuously embedded in $H^{-1}(\Omega)$ for $q>1$. In addition,  $\gradapprox :X\to L^p(\Omega)^\mathrm{d}$ is Fr\'{e}chet differentiable with derivative $\gradapprox$ for $2\leq p<\infty$, and  $P: L^p(\Omega)^d\to L^q(\Omega)$ with $2\leq 2q \leq p $ is Newton differentiable. Then, as in the previous item, $u\mapsto P(\gradapprox u)$ is Newton differentiable as map from $X\to L^q(\Omega)$ with Newton derivative $u\mapsto G_P(\gradapprox u)\gradapprox$. By choosing a $q>1$, an application of H\"{o}lder's inequality shows that $\mathcal{P}^\nabla:X \to X'$ is Newton differentiable with Newton derivative given by \eqref{eq:G_Pcal}.
\end{proof}

Next we prove the existence of a solution to the sensitivity equation: Note that this requires to show the monotonicity of $G_\pcal(u)$  directly. The reason for this is that   $\mathcal{P}$ does not satisfy the continuity assumption of Lemma \ref{thm:Fmono}, i.e., continuity of $w\mapsto \langle G_{\mathcal{P}}(w)h,h\rangle$.

\begin{lemma}\label{prop:sensitivity_existence}
	There exists a unique $w\in X$ such that 
\begin{equation}\label{eq:derivative_eqn}
	\langle-\epsilon\Delta w, z\rangle_{X', X}
	+ \langle G_{\pcal}(u)w, z\rangle_{X', X}
	= \langle h, z\rangle_{X', X}
\end{equation}
for all $z\in X$, for $\pcal=\pcal^\nabla$ and for $\pcal=\pcal^\gradapprox$.
\end{lemma}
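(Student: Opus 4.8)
The plan is to apply the existence theory for strongly monotone operators (the same \cite[Theorem 2.1]{Showalter} used throughout) to the operator $z\mapsto -\epsilon\Delta z + G_{\pcal}(u)z$ acting on $X=H_0^1(\Omega)$. Since $-\epsilon\Delta$ is strongly monotone and bounded and linear, and since $G_\pcal(u)$ is bounded and linear (by \cref{cor:pcal_newtondifferentiable}, $G_\pcal(u)\in\mathcal{L}(X,X')$ in case (ii), and in case (i) one must first check that the right-hand side of \cref{eq:G_Pcal} extends to a bounded functional on $X$, not merely $W_0^{1,\infty}(\Omega)$ — see the obstacle below), it suffices to show that $G_\pcal(u)$ is monotone, i.e., $\langle G_\pcal(u)v,v\rangle\geq 0$ for all $v\in X$; then the sum is strongly monotone with the same constant $\epsilon C_P$ (Poincaré) and continuous, and Lax–Milgram or the cited monotone-operator theorem gives a unique $w$. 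The key point, emphasised in the paragraph preceding the statement, is that we cannot invoke \cref{thm:Fmono} because $w\mapsto\langle G_\pcal(w)h,h\rangle$ is not continuous, so monotonicity of $G_\pcal(u)$ must be verified by hand from the explicit formula \cref{eq:G_Pcal}.

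The core computation is pointwise: fix $x\in\Omega^+(u)$, write $p=\mathrm{D}u(x)\in\mathbb{R}^\mathrm{d}$ with $|p|>0$, and $\xi=\mathrm{D}v(x)$, and examine the quadratic form $\xi\mapsto (G_P(p)\xi)\cdot\xi$ where, from \cref{eq:G_P}, $G_P(p)=q(p)G_b(p)+b(p)Q(p)$ with $q(p)=p/|p|$, $b(p)=(|p|-\varphi)^\pm\in[0,1]$, $G_b(p)=\chi_{(0,1)}(|p|-\varphi)\,p^T/|p|$, and $Q(p)=\frac{1}{|p|}\bigl(\mathrm{id}-\frac{pp^T}{|p|^2}\bigr)$. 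Then
\begin{equation*}
(G_P(p)\xi)\cdot\xi = \chi_{(0,1)}(|p|-\varphi)\,\frac{(p\cdot\xi)^2}{|p|^2} + \frac{b(p)}{|p|}\Bigl(|\xi|^2 - \frac{(p\cdot\xi)^2}{|p|^2}\Bigr).
\end{equation*}
The first term is manifestly nonnegative. The second term is nonnegative because $b(p)\geq 0$, $|p|>0$, and by Cauchy–Schwarz $(p\cdot\xi)^2\leq|p|^2|\xi|^2$, so $|\xi|^2-(p\cdot\xi)^2/|p|^2\geq 0$ — indeed $Q(p)$ is $|p|^{-1}$ times the orthogonal projection onto $p^\perp$, hence positive semidefinite. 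Therefore the integrand in $\langle G_\pcal(u)v,v\rangle=\int_{\Omega^+(u)}(G_P(\mathrm{D}u)\mathrm{D}v)\cdot\mathrm{D}v\,\textup{d}x$ is nonnegative a.e., giving monotonicity; this works verbatim for both $\mathrm{D}=\nabla$ and $\mathrm{D}=\gradapprox$ since the argument is algebraic and does not depend on which linear operator $\mathrm{D}$ is.

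The main obstacle is the functional-analytic setting in case $\pcal=\pcal^\nabla$. \Cref{cor:pcal_newtondifferentiable} only asserts $G_\pcal(u)\in\mathcal{L}(X,(W_0^{1,\infty}(\Omega))')$, so a priori the bilinear form $(v,z)\mapsto\langle G_\pcal(u)v,z\rangle$ is controlled by $\|\mathrm{D}v\|_{L^2}\|\mathrm{D}z\|_{L^\infty}$ rather than $\|\mathrm{D}v\|_{L^2}\|\mathrm{D}z\|_{L^2}$, which is insufficient to apply Lax–Milgram on $H_0^1(\Omega)$. However, since $b(\nabla u)\leq 1$, $|q(\nabla u)|\leq 1$, and $|Q(\nabla u)\xi|\leq |\xi|/|\nabla u|$, the pointwise operator norm of $G_P(\nabla u)(x)$ is \emph{not} uniformly bounded (it blows up like $1/|\nabla u|$ where $|\nabla u|$ is small but $|\nabla u|-\varphi\in(0,1)$ — note $\varphi\geq\nu>0$ forces $|\nabla u|>\nu$ on the set where $b>0$, so in fact $|G_P(\nabla u)|\leq 2/\nu$ pointwise). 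Thus $|G_P(\nabla u)\nabla v|\leq (2/\nu)|\nabla v|\in L^2$, the bilinear form is bounded on $X\times X$ by $(2/\nu)\|\nabla v\|_{L^2}\|\nabla z\|_{L^2}$, and by density $G_\pcal(u)$ extends to $\mathcal{L}(X,X')$. With this observation in hand, the argument proceeds identically in both cases: the sum $-\epsilon\Delta+G_\pcal(u)$ is a bounded, strongly monotone (equivalently, coercive) linear operator $X\to X'$, so \cref{eq:derivative_eqn} has a unique solution $w\in X$ for every $h\in X'\supset Y'$.
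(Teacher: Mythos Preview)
Your proof is correct and follows essentially the same approach as the paper: reduce to showing monotonicity of $G_\pcal(u)$, verify it pointwise by splitting $G_P$ into its two summands and observing each yields a nonnegative quadratic form (using Cauchy--Schwarz for the $Q$ term), then apply the strongly monotone operator existence result. Your treatment of the boundedness issue for $\pcal^\nabla$---arguing that $G_P(\nabla u)$ vanishes unless $|\nabla u|>\varphi\geq\nu$, hence $|G_P(\nabla u)|\in L^\infty$ and the bilinear form extends to $X\times X$---is exactly what the paper compresses into the single remark ``$|G_P(\rmD u)|\in L^\infty(\Omega)$ for each $u\in X$''.
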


\begin{proof}
	The map $-\Delta $ is strongly monotone and linear, then we only need to show  that $G_{\pcal}(u)\in \mathcal{L}(X, X')$ is monotone, i.e.,
	\[
		\langle G_\pcal(u) z, z\rangle_{X', X}\geq 0,
	\]
	for all $z\in X$.

	Note that $|G_P(\rmD u)|\in L^\infty(\Omega)$ for each $u\in X$, then we are only left to prove that $\rmD 'G_P(\rmD u) \rmD \geq 0$. Exploiting the structure of $G_P(\rmD u)$ of \cref{eq:G_P} we have
	\begin{equation*}
		\langle G_\pcal(u) z, z\rangle_{X', X} =
		(q(\rmD u)G_b(\rmD u)\rmD z, \rmD z) + ( b(\rmD u)Q(\rmD u)\rmD z, \rmD z).
	\end{equation*}
For the first term we have
	\[
		G_{\min}^{\max}(|\rmD u| - \varphi)
		\frac{(\rmD u)^T \rmD z}{|\rmD u|^2}
		\cdot \frac{(\rmD u)^T \rmD z}{|\rmD u|^2}\geq 0,
	\]
	and since $b(\rmD u)\geq 0$ and
	\[
		Q(\rmD u)\rmD z \cdotp \rmD z = \frac{1}{|\rmD u|}\left(|D z|^2 - \frac{\left((\rmD u)^T\rmD z\right)^2}{|\rmD u |^2}\right) \geq 0,
	\]
  the second term is also monotone and the proof is complete.
\end{proof}

We are now in shape to apply the results from \cref{sec:abstract_results} to this specific control problem.
\begin{theorem}\label{thm:ftou}
	For $\pcal = \pcal^\gradapprox$,
	the solution map $Y'\ni f\mapsto u(f)\in X$ of \cref{eq:reg} is Newton differentiable.
\end{theorem}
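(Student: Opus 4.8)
The plan is to read \cref{eq:reg} with $\pcal = \pcal^\gradapprox$ as the abstract equation \cref{eq:u_f_definition} with $A := -\epsilon\Delta$ and $F := \gamma\pcal^\gradapprox$, and then to conclude by verifying that this pair meets the hypotheses of \cref{thm:thm1}. The conclusion of that theorem is exactly the asserted Newton differentiability of $Y'\ni f\mapsto u(f)\in X$, and it additionally identifies a Newton derivative through the sensitivity equation \cref{eq:w_candidate}; well-posedness of $u(f)$ itself was already recorded above.

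First I would dispatch the assumptions on $A=-\epsilon\Delta$. As a bounded linear operator $X\to X'$ it is continuously Fréchet differentiable with $A'(u)=-\epsilon\Delta$ for every $u\in X$; the Poincaré inequality gives $\langle -\epsilon\Delta h,h\rangle=\epsilon\|\nabla h\|_{L^2}^2\geq c\|h\|_X^2$, so \cref{eq:first} holds with $q=2$ and $D=X$. Because $A$ is linear, the splitting \cref{eq:second} is trivial with remainder $w(u,h)\equiv 0$ (equivalently $A''\equiv 0$), so $A$ meets the hypotheses of \cref{thm:inheritedCoercivity} with $q=2$ and $D=X$, and $A'(u)$ is strongly monotone with constant $\tilde c=c$.

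Next I would handle $F=\gamma\pcal^\gradapprox$. Monotonicity and continuity of $\pcal^\gradapprox$ were established earlier (monotonicity from the convexity of $J_{\pcal}$), and both survive multiplication by $\gamma>0$; Newton differentiability of $\pcal^\gradapprox:X\to X'$ with Newton derivative $G_\pcal$ is item (ii) of \cref{cor:pcal_newtondifferentiable}, hence $F:X\to X'$ is Newton differentiable with Newton derivative $\gamma G_\pcal$. As already flagged, $\pcal^\gradapprox$ fails the continuity hypothesis of \cref{thm:Fmono}, so I would invoke the alternative allowed by \cref{thm:thm1}, namely that the Newton derivative itself is monotone: this is precisely \cref{prop:sensitivity_existence}, which shows $\langle G_\pcal(u)z,z\rangle\geq 0$ for all $z\in X$ (so $\gamma G_\pcal(u)$ is monotone) and, in the same breath, supplies the unique solvability of \cref{eq:w_candidate} that makes the candidate Newton derivative $G_u(f)$ well-defined.

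With every hypothesis of \cref{thm:thm1} in place, that theorem delivers the claim. In truth the present proof is an assembly step rather than a place where a new estimate is needed --- the analytic content lives in \cref{lm:Gp_Newton_derivative,cor:pcal_newtondifferentiable,prop:sensitivity_existence}. The one point that genuinely requires care, and the reason the statement is confined to $\pcal^\gradapprox$ and excludes $\pcal^\nabla$, is the functional-analytic bookkeeping: \cref{thm:thm1} is phrased for an operator $F:X\to X'$, whereas by item (i) of \cref{cor:pcal_newtondifferentiable} the Newton derivative of $\pcal^\nabla$ is only available on the smaller test space $W_0^{1,\infty}(\Omega)$, i.e.\ as a map into $(W_0^{1,\infty}(\Omega))'$ rather than into $X'$. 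Thus the weak-gradient case falls outside this abstract framework and would demand a separate treatment.
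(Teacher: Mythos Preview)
Your proposal is correct and follows the same approach as the paper: read \cref{eq:reg} as $A(u)+F(u)=f$ with $A=-\epsilon\Delta$ and $F=\gamma\pcal^\gradapprox$, and apply \cref{thm:thm1} with the hypotheses supplied by \cref{cor:pcal_newtondifferentiable} and \cref{prop:sensitivity_existence}. Your write-up simply spells out in more detail what the paper compresses into a single sentence, including the observation about why $\pcal^\nabla$ is excluded.
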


\begin{proof}
	The result follows as direct application of the abstract result in \cref{thm:thm1} where the hypotheses are covered by Lemma \ref{cor:pcal_newtondifferentiable} and \ref{prop:sensitivity_existence}.
\end{proof}

A significant obstacle of the ``non-$\epsilon$ differentiability gap'' in Lemma \ref{lm:Gp_Newton_derivative} results in the lack of an analogous result for the case $\pcal = \pcal^\nabla$. However, some of these issues can be resolved in the algorithmic area with the introduction of a ``lifting operator'', see \cite{MR3333651}.

A direct corollary of the previous result concerns the Newton differentiability of the reduced functional for the control problem under study.

\begin{corollary}
	Provided that $Y'\ni f\mapsto \|f\|^2_{Y'}$ is Newton differentiable, the  functional 	$Y'\ni f\mapsto  J(u(f), f)$ where $J$ is defined in \eqref{eq:J} is Newton differentiable.
\end{corollary}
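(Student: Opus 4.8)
The plan is to show that the reduced functional $f \mapsto J(u(f),f)$ decomposes into a composition of Newton differentiable maps and a sum, and then invoke the standard calculus rules for Newton differentiability (chain rule and sum rule, as found in \cite{Ito2008}). Concretely, write $J(u(f),f) = J_1(u(f)) + \lambda\, N(f)$, where $J_1(u) := \tfrac12\int_\Omega |u - u^d|^2\,\textup{d}x$ and $N(f) := \|f\|^2_{Y'}$. The second summand is Newton differentiable by hypothesis, so the work lies entirely with the first summand.

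First I would observe that $J_1 : X \to \mathbb{R}$ is in fact continuously Fr\'{e}chet differentiable: it is a continuous quadratic functional on $L^2(\Omega)$, with derivative $J_1'(u) = (u - u^d, \cdot)_{L^2}$, and $X = H_0^1(\Omega)$ embeds continuously into $L^2(\Omega)$, so $J_1$ restricted to $X$ is continuously Fr\'{e}chet differentiable, hence Newton differentiable. Next, by Theorem~\ref{thm:ftou}, the solution map $Y' \ni f \mapsto u(f) \in X$ is Newton differentiable. Therefore the composition $f \mapsto J_1(u(f))$ is Newton differentiable, since the composition of a Newton differentiable map with a continuously Fr\'{e}chet differentiable (a fortiori Newton differentiable) map is Newton differentiable; one must note here that the relevant composition rule requires the \emph{outer} map to be Fr\'{e}chet differentiable (or the inner map to be Lipschitz), and both conditions are available — $J_1$ is $C^1$, and moreover $f \mapsto u(f)$ is Lipschitz as already shown in the proof of Theorem~\ref{thm:thm1}. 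Finally, the sum $J_1(u(f)) + \lambda N(f)$ is Newton differentiable as a sum of two Newton differentiable maps, with Newton derivative the sum of the Newton derivatives.

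The only genuinely delicate point — and the one I would state carefully rather than wave through — is the validity of the chain rule for Newton differentiability in this setting. Unlike Fr\'{e}chet differentiability, Newton differentiability does not compose freely: one needs either local Lipschitz continuity of the inner map or Fr\'{e}chet differentiability of the outer map to control the error terms. Here we are fortunate on both counts, so I would cite the precise composition lemma (e.g.\ \cite[Section~8]{Ito2008}) and note that its hypotheses are met: $u(\cdot)$ is Lipschitz from $Y'$ to $X$ and $J_1$ is $C^1$ from $X$ to $\mathbb{R}$. With that in hand, an explicit Newton derivative is $h \mapsto (u(f) - u^d,\, G_u(f)h)_{L^2} + \lambda\, G_N(f)h$, where $G_u(f)$ is the Newton derivative of the control-to-state map from Theorem~\ref{thm:thm1} (with $\pcal = \pcal^\gradapprox$) and $G_N(f)$ is a Newton derivative of $f \mapsto \|f\|^2_{Y'}$. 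This completes the argument; no further estimates are required beyond those already established.
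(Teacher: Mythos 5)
Your proposal is correct and follows essentially the same route as the paper: decompose the reduced functional, note that $u\mapsto \frac12\int_\Omega|u-u^d|^2\,\textup{d}x$ is (continuously) Fr\'{e}chet differentiable, invoke Theorem~\ref{thm:ftou} for the control-to-state map, and conclude via the composition and sum rules for Newton differentiable maps. Your version is simply more explicit about why the chain rule applies (Lipschitz inner map, $C^1$ outer map) and about the form of the resulting Newton derivative, which the paper leaves implicit.
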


\begin{proof}
	Note that $u\mapsto \frac12\int_\Omega|u-u^d|^2\,\textup{d}x$ is Fr\'{e}chet differentiable. Then, the proof is an application of the previous theorem and the composition result of Newton differentiable maps in \cite{hintermuller2010semismooth}.
\end{proof}

\section{Solution algorithms}
The idea of this section is to provide a solid idea of the applicability of the results of the previous section for the development of solution  algorithms for \eqref{eq:reg} and \eqref{eq:control_problem}.

As defined previously, the map $E:X\to X'$ is given by
\[
	E (u) = -\epsilon\Delta u +\gamma\pcal(u) - f,
\]
so that the state equation \cref{eq:reg}  
can be written as 
\begin{equation*}
	E(u) = 0.
\end{equation*} 
It is known that  for every $f\in H^{-1}(\Omega)$, this equation is uniquely solvable in all cases contemplated in this paper, namely for $\mathrm{D}=\nabla$ and $\mathrm{D}=\gradapprox$.

In the case $\mathrm{D}=\gradapprox$, and given that $E$ is Newton differentiable as a map $X\to X'$, a Newton derivative of $E$ is given by
\[
	G_{E}(u)v = -\epsilon\Delta v + \gamma G_\pcal(u) v,
\]
where $G_\pcal$ is made explicit in \eqref{eq:G_Pcal}. Further, since the constant in the strong monotonicity of $-\epsilon\Delta $ is identical to $\epsilon$, and we have proven that $G_\pcal(u)$ is monotone. It follows that  $G_E(u)$ is nonsingular and its inverse
is uniformly bounded by $\epsilon^{-1}$, i.e.,
\[
	\|G(u)^{-1}\|\leq\frac{1}{\epsilon}.
\]
Hence, solutions to $E(u)=0$ are suitable to be approximated by a function space version of a \emph{semismooth Newton method}; see  \cite[Theorem 8.16]{Ito2008}.

The semismooth Newton iteration is then given by $$u^{n+1} := u^n + v^\ast,$$ where the Newton step $v^\ast$ is defined as the solution of
\begin{equation}\label{eq:newton_step}
	G_E(u^n) v = - E(u^n).
\end{equation}
This yields (see \cite[Theorem 8.16]{Ito2008}) that
the sequence $\{u^{n}\}_{n\in\mathbb{N}}$ defined by the iteration \cref{eq:newton_step}  converges superlinearly to the unique solution $u^\ast\in X$ of the state equation $E(u) = 0$,
provided that the initial iterate $u_0$ is sufficiently close to $u^\ast$. The fact that $\mathrm{D}=\gradapprox$ is considered, leads to the idea to further apply a path-following method simultaneously on $\gamma$ and $\mu$.

Under mild conditions we have proven that $Y'\ni f\mapsto  J(u(f), f)$ is Newton differentiable. In the case where $Y'$ does not have a large number of degrees of freedom, a descent approach for the overall problem is possible. However the computation of the entire derivative is prohibitive if  the dimension of $Y'$ is not small. On the other hand, if the approach is such that the entire derivative is not needed (only a small number of components are),  a descent method is directly suitable.

\section{Conclusion and future research}

We have provided several abstract results that link notions of differentiability to monotonicity ones. In fact, in Theorem \ref{thm:thm1}, the hypotheses contemplate the possibility of heavily nonlinear operators, e.g., the $p-$Laplacian defined as $-\Delta_pu=-\mathrm{div}(|\nabla u|^{p-2}\nabla u)$ with $p\in [2,3)$ is within the scope of the theorem. The application of the  abstract results to the specific example of control of the pile of granular material is then tackled in Theorem \ref{thm:ftou} and its corollary. There, a function space approach is suitable provided that the gradient is considered in its approximated version. The result of Newton differentiability on Lemma \ref{cor:pcal_newtondifferentiable} is of interest in its own right; it provides a better result than the one standing in the literature for the case $\mathrm{D}=\nabla$. The short discussion on the algorithmic development is a current area of active research.

\bibliographystyle{abbrv}
\bibliography{references}

\end{document}